\newcommand{\set}[1]{\left\{#1\right\}}
\newcommand{\norm}[1]{\left\lVert#1\right\rVert}
\newcommand{\snorm}[1]{\big\lVert#1\big\rVert}
\newcommand{\ind}[1]{1\hspace{-.28em}\mathrm{I}_{#1}}
\newcommand{\abs}[1]{\left\vert#1\right\vert}
\newcommand{\aabs}[1]{\big\lvert#1\big\rvert}
\newcommand{\eps}{\varepsilon}
\newtheorem{thm}{Theorem}[section]
\newtheorem{lem}{Lemma}[section]
\theoremstyle{remark}
\newtheorem{zau}{Remark}[section]
\theoremstyle{definition}
\newcommand{\cI}{\mathcal{I}}
\begin{document}
\title[Stochastic differential
 equation involving Wiener process and fBm]{Stochastic differential
 equation involving Wiener process and fractional Brownian motion with Hurst index  $H> 1/2$}
\author{Yu.S. Mishura}
\address{\it Department of Probability, Statistics
and Actuarial Mathematics, Mechanics and Mathematics Faculty, Kyiv Taras Shevchenko National University,
Volodymyrska, 60, 01601 Kyiv, Ukraine}
\email{myus@univ.kiev.ua}
\author{G.M. Shevchenko}
\email{zhora@univ.kiev.ua}

\subjclass[2010]{60G15;
60G22; 60H10; 60J65}

\keywords{Fractional Brownian motion, mixed stochastic differential equation, pathwise integral, Euler approximation}

\begin{abstract}
We consider a mixed stochastic differential
equation driven by possibly dependent fractional Brownian motion and Brownian motion.
Under mild regularity assumptions on the coefficients, it is proved that the equation has a
unique solution.
\end{abstract}
\maketitle
\section*{Introduction}
Fractional Brownian motion (fBm) with a Hurst parameter $H\in(0,1)$ is defined formally as a continuous centered Gaussian process $B_t^H=\{B_t^H, t\geq
0\}$ with the covariance $E
B_t^HB_s^H=1/2( s^{2H}+t^{2H}-\vert
t-s\vert^{2H}).$ For $H>1/2$ it exhibits a property of long-range dependence, which makes it a popular model for long-range
dependence in natural sciences, financial mathematics etc. For this reason, equations driven by fractional Brownian motion have been an object of intensive study during the last decade.

There are two principal ways to define an integral with respect to fractional Brownian motion.

One possibility is Skorokhod, or divergence integral introduced in the fractional Brownian setting in \cite{decre98}. However this definition is not very practical: it is based on Wick rather than usual products, and unlike Brownian case, in the fractional Brownian case this makes difference when integrating non-anticipating functions because of dependence of increments. This makes this definition worthless for most applications (most notably, those in financial mathematics). Moreover, it is impossible to solve stochastic differential equations with such integral except the cases of  additive or multiplicative noise; the latter case was considered in \cite{misquasi}.

Another approach is a pathwise integral, defined first in \cite{Zah98a} for fBm with $H>1/2$ as a Young integral. The papers \cite{kub1,NR,Ruzma} were the first to prove existence and uniqueness of stochastic differential equations involving such integrals. Later the pathwise approach was extended with the help of Lyons' rough path theory to the case of arbitrary $H$ in \cite{CouQi} where also unique solvability of
equations with $H>1/4$ was proved. Numerical methods for pathwise stochastic differential equations with fBm were considered in \cite{MSa,nourdin,davie,deya}.

In this paper we focus on the following mixed stochastic differential equation
involving Wiener process and fractional Brownian motion with Hurst
index $H \in (1/2,1)$:
 \begin{equation}\label{main} X_t =X_0 +\int_0^t a(s,X_{s})ds+
\int_0^tb(s,X_{s})dW_s+\int_0^tc(s,X_{s})dB_s^H,\
t\in[0,T],\end{equation}
where the integral w.r.t.\ Wiener process is the standard It\^o integral, and the integral w.r.t.\ fBm is pathwise generalized
Lebesgue--Stieltjes, or Young integral.
The motivation to consider such equations comes e.e.\ from financial applications, where Brownian motion as a model is inappropriate because of
the lack of memory, and fractional Brownian motion with $H>1/2$ is too smooth.

Unique solvability or \eqref{main} was proved in \cite{kub2} for time-independent coefficients and zero drift, \cite{MisP} for  $H\in (3/4,1)$ and bounded coefficients, and in \cite{GN} for any $H>1/2$, but under the assumption that $W$ and $B^H$ are independent. We generalize the latter result proving  that \eqref{main} has a unique solution for any $H\in (1/2,1) $ with $W$ and $B^H$ possibly dependent. The paper is organized as follows. In Section 1, we give necessary definitions and main hypotheses. In Section 2, we define Euler approximations of \eqref{main} and establish useful facts for them. In Section 3, we prove fundamental property of Euler approximations, and Section 4 contains the main result about existence and uniqueness of solution to \eqref{main}.

\section{Basic definitions and assumptions}

\subsection{Fractional derivatives, integrals and norms}
Let ($\Omega,\mathcal{F},\mathcal{F}_t,P$) be a complete
probability space equipped with a filtration satisfying standard
assumptions. Denote $\{W_t,
t \in [0,T]\}$  a standard $\mathcal F_t$-Wiener process, and $\{B_t^H, t\geq
0\}$ an fBm adapted to the filtration $\mathcal F_t$.

To integrate with respect to fractional Brownian motion, we  use the generalized (fractional) Lebesgue-Stieltjes integral
(see \cite{NR,Zah98a}). It is defined as follows.

Consider two  continuous functions  $f$ and $g$, defined on some interval $[a,b]\subset \mathbb{R}$.
For $\alpha\in (0,1)$ define fractional derivatives
\begin{gather*}
\big(D_{a+}^{\alpha}f\big)(x)=\frac{1}{\Gamma(1-\alpha)}\bigg(\frac{f(x)}{(x-a)^\alpha}+\alpha
\int_{a}^x\frac{f(x)-f(u)}{(x-u)^{1+\alpha}}du\bigg)1_{(a,b)}(x),\\
\big(D_{b-}^{1-\alpha}g\big)(x)=\frac{e^{-i\pi
\alpha}}{\Gamma(\alpha)}\bigg(\frac{g(x)}{(b-x)^{1-\alpha}}+(1-\alpha)
\int_{x}^b\frac{g(x)-g(u)}{(x-u)^{2-\alpha}}du\bigg)1_{(a,b)}(x).
\end{gather*}
Assume that
 $D_{a+}^{\alpha}f\in L_p[a,b], \ D_{b-}^{1-\alpha}g\in
L_q[a,b]$ for some $p\in (1,1/\alpha)$, $q = p/(p-1)$.

Under these
assumptions, the generalized (fractional) Lebesgue-Stieltjes, or Young
integral $\int_a^bf(x)dg(x)$ is defined as
\begin{equation*}\int_a^bf(x)dg(x)=e^{i\pi\alpha}\int_a^b\big(D_{a+}^{\alpha}f\big)(x)\big(D_{b-}^{1-\alpha}g\big)(x)dx.
\end{equation*}
It was shown in \cite{Samko} that for any $\alpha\in(1-H,1)$ there exists the fractional derivative
$D_{b-}^{1-\alpha}B^H\in L_{\infty}[a,b].$ Hence, for
$f$ with $D_{a+}^\alpha f\in L_1[a,b]$ we can define the integral w.r.t.
fBm according to this formula:
\begin{equation}
\label{fBm}
\int_a^bf_sdB^H_s=e^{i\pi\alpha}\int_a^b\big(D_{a+}^{\alpha}f\big)(x)\big(D_{b-}^{1-\alpha}B^H\big)(x)dx.
\end{equation}

In view of this, we will consider the following two norms for $\alpha \in(1-H,1/2)$:
\begin{gather*}
\norm{f}^2_{2,\alpha,[a,b]} = \int_a^b \left(\abs{f(s)} + \int_a^s \abs{f(s)-f(z)}(s-z)^{-1-\alpha}dz\right)^2\big(s^{-\alpha} + (t-s)^{-\alpha-1/2}\big)ds,\\
\norm{f}_{\infty,\alpha,[a,b]} = \sup_{s\in [a,b]} \left(\abs{f(s)} + \int_a^s \abs{f(s)-f(z)}(s-z)^{-1-\alpha}dz\right).
\end{gather*}
It is clear that $\norm{f}_{2,\alpha,[a,b]}\le C_{\alpha,a,b}\norm{f}_{\infty,\alpha,[a,b]}$. Throughout the paper there will be no ambiguity
about $\alpha$, so for the sake of shortness we will
denote $\norm{f}_{x,t}=\norm{f}_{x,\alpha,[0,t]}$, where $x\in\set{2,\infty}$.

\subsection{Estimates for stochastic integrals and increments}

Recall that the classical Garsia--Rodemich--Rumsey inequality \cite{garsia} states that
for a function $f\in C([0,T])$ and any $p>0$, $\theta>1/p$
\begin{equation}
\sup_{0\le v<u\le T} \frac{\abs{f(u)-f(v)}}{(u-v)^{\theta-1/p}}\le C_{\alpha,p,\theta}\left(\int_0^T\!\!\int_0^T\frac{\abs{f(x)-f(y)}^{p}}
{\abs{x-y}^{\theta p+1}}\,dx\,dy\right)^{1/p}.
\end{equation}
Setting in this inequality for $\eta\in(0,1/2)$ \ $p=2/\eta$, $\theta = (1-\eta)/2$, we get that for any $t>0$, $u,v\in [0,t]$
\begin{equation}\label{mc-bm}\abs{W_u - W_v}\le K^{W,\eta}_t\abs{u-v}^{1/2-\eta},
\end{equation}
where \begin{equation}K^{W,\eta}_t= C_{\eta}\left(\int_0^t\int_0^t\frac{\abs{W_x-W_y}^{2/{\eta}}}{\abs{x-y}^{1/{\eta}}}dx\,dy\right)^{{\eta/2}},\end{equation}
$C_\eta$ is a nonrandom constant.

Similarly, we get for any $\eta\in(0,H)$
\begin{equation}\label{mc-fbm}\abs{B^H_u - B^H_v}\le  K^{B,\eta}_t\abs{u-v}^{H-\eta},
\end{equation}
where
$$
K^{B,\eta}_t= C_{H,\eta} \Big(\int_0^t\int_0^t\frac{\vert
B_x^H-B_y^H\vert^{2/\eta}}{\vert x-y
\vert^{2H/\eta}}dx\,dy\Big)^{\eta/2}.
$$
Hence it is easy to deduce that for any $\alpha\in (1-H,1/2)$, $\eps<\alpha+H-1$
$$
\sup_{0\leq
u< v\leq t}\aabs{\big(D_{v-}^{1-\alpha}B^H\big)(u)}
\leq  C_{\alpha,H,\varepsilon}K^{B,\eps}_t.
$$
Thus, thanks to \eqref{fBm}, the stochastic integral with respect to fBm  admits  the following estimate:
\begin{equation}\label{ineq}
\Big\vert \int_u^v f(s)dB_s^H\Big\vert \le
C_{\alpha} K^{B,\eps}_t \int_u^v\Big(\abs{f(s)}
(s-u)^{-\alpha}+\int_u^s \abs{f(s)-f(z)}
(s-z)^{-\alpha-1}dz\Big)ds
\end{equation}
for any
$\alpha\in(1-H,1/2) $, $t>0$, $u\le v\le t$ and any $f$ such that the right-hand side of this inequality is finite.

\subsection{Assumptions}
In what follows we will assume the following standard hypotheses.
\begin{list}{Hypotheses}{\parsep=-1mm}
\item[(A)] \emph{Linear growth}: for any $ t\in [ 0,T]$ and any $x\in
\mathbb{R}$ \begin{equation*} | a(t,x)|  +| c(t,x)| \leq
K(1+\abs{x}).\end{equation*}
\item[(B)] \emph{Lipschitz continuity of $a,b$}: for any  $t\in[0,T]$ and $x,y\in \mathbb R$
$$|a(t,x)-a(t,y)|+|b(t,x)-b(t,y)|\le K|x-y|.$$
\item[(C)] \emph{H\"older continuity in time}: the function $c(t,x)$ is differentiable in $x$ and there exists $\beta\in(1-H,1)$
such that for any ${s,t\in[0,T]}$ and any ${x\in \mathbb R}$
$$|a(s,x)-a(t,x)|+|b(s,x)-b(t,x)|+|c(s,x)-c(t,x)|+|\partial_x{}c(s,x)-\partial_x{}c(t,x)|\le{K}|s-t|^{\beta}.$$
\item[(D)] \emph{Lipschitz continuity of $\partial_x c$}: for any  $t\in[0,T]$ and any $ x,y\in \mathbb R$  $$
|\partial_x{}c(t,x)-\partial_x{}c(t,y)|\le{K}|x-y|.
$$
\item[(E)] \emph{Boundedness of $b$ and $\partial_x c$}: for any $T\in[0,T]$ and $x\in \mathbb{R}$
$$
\abs{b(t,x)} + \abs{\partial_x c(t_x)} \leq K.
$$
\end{list}
Here $K$ is  a constants independent of $x$, $y$, $s$ and $t$.

\section{Auxiliary properties of Euler approximations}

For $n\ge 1$ consider the following partition of the fixed interval $[0,T]:$
$\{0=\nu_0 < \nu_1< \dots < \nu_n = T,\ \delta=T/n\},\
\nu_k=k\delta.$

Consider Euler approximation for equation  \eqref{main}:
\begin{equation*}
\begin{gathered}
X_{\nu_{k+1}}^{\delta} =X_{\nu_k}^{\delta} +a(\nu_k,X_{\nu_k}^{\delta})(\nu_{k+1}-\nu_{k})+
b(\nu_k,X_{\nu_k}^{\delta})(W_{\nu_{k+1}}-W_{\nu_k})+c(\nu_{k},X_{\nu_{k}}^{\delta})(B_{\nu_{k+1}}^H-B_{\nu_{k}}^H),
\end{gathered}
\end{equation*}
with $X_{\nu_0}^{\delta}=X_0$.

Set $t^\delta_u=\max \{ \nu_n:\nu_n \leq u \}$ and define continuous interpolation by
\begin{equation*} X_u^{\delta} =X_{t^\delta_u}^{\delta} +a(t^\delta_u,X_{t^\delta_u}^{\delta})(u-t^\delta_u)+
b(t^\delta_u,X_{t^\delta_u}^{\delta})(W_u-W_{t^\delta_u})+c(t^\delta_u,X_{t^\delta_u}^{\delta})(B_u^H-B_{t^\delta_u}^H),\end{equation*}
or, in the integral form,
\begin{equation}\label{euler} X_u^{\delta} =X_0 +\int_0^u a(t^\delta_s,X_{t^\delta_s}^{\delta})ds+
\int_0^ub(t^\delta_s,X_{t^\delta_s}^{\delta})dW_s+\int_0^uc(t^\delta_s,X_{t^\delta_s}^{\delta})dB_s^H.
\end{equation}

Observe that the estimates \eqref{mc-bm} and \eqref{mc-fbm} together with
our main hypotheses imply that
\begin{equation}
\label{mc-euler}
\abs{X^\delta_s - X^\delta_{t_s^\delta}}\le C K^{\eta}_s(s-t^\delta_s)^{1/2-\eta}\big(1+\aabs{X_{t_s^\delta}^\delta}\big).
\end{equation}
with $K^{\eta}_s=K^{B,\eta}_s+ K^{W,\eta}_s$.
For $N\ge 1$ define a stopping time $\tau_N = \inf \set{t: K^\eta_t \ge N}\wedge T$ and a stopped
process $X^{\delta,N}_t=X^\delta_{t\wedge \tau_N}$. 

The following lemma provides an  estimate of Euler approximations, which is essential to establishing the main result, because it is independent
of $\delta$.

\begin{lem}\label{lem-aprior}
For $\alpha \in (1-H,\frac12\wedge \beta)$, $p> 0$, $N\ge 1$
\begin{equation*}
\sup_{\delta} E\left[\snorm{X^{\delta,N}}_{\infty,T}^{p}\right] <\infty.
\end{equation*}
\end{lem}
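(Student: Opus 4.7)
The plan is to establish a Gronwall-type estimate for $\norm{X^{\delta,N}}_{\infty,t}$ by exploiting the integral representation \eqref{euler} together with the pathwise bound \eqref{ineq} for Young integrals. On the stopped interval $[0,\tau_N]$ we have $K^{B,\eta}_s\le N$ and $K^{W,\eta}_s\le N$, so all the random prefactors produced by \eqref{ineq} and \eqref{mc-euler} are controlled by $N$, which will eventually yield a deterministic Gronwall inequality with an $N$-dependent constant.

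First, from \eqref{euler} I would decompose $X_u^\delta - X_0$ into a drift integral, an It\^o integral against $W$, and a Young integral against $B^H$, and treat each separately. The drift contribution is bounded by hypothesis (A). The Young integral is handled pathwise via \eqref{ineq} together with $|c(t_s^\delta,X_{t_s^\delta}^\delta)|\le K(1+|X_{t_s^\delta}^\delta|)$ from (A) and
$$\aabs{c(t_s^\delta,X_{t_s^\delta}^\delta)-c(t_r^\delta,X_{t_r^\delta}^\delta)} \le K\abs{s-r}^{\beta} + K\aabs{X_{t_s^\delta}^\delta-X_{t_r^\delta}^\delta},$$
which follows from (C), the mean value theorem, and boundedness of $\partial_x c$ in (E). The Euler increment $\aabs{X_{t_s^\delta}^\delta-X_{t_r^\delta}^\delta}$ is then reabsorbed into $\norm{X^{\delta,N}}_{\infty,s}$ via \eqref{mc-euler} and the very definition of the norm.

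Inserting these estimates into $\norm{\cdot}_{\infty,t}$ and applying Fubini to swap the orders of integration in the H\"older part of the norm (turning, e.g., $\int_0^u(u-z)^{-1-\alpha}\int_z^u\cdots ds\,dz$ into $\int_0^u\int_0^s\cdots dz\,ds$), I expect a pathwise inequality of the form
$$\norm{X^{\delta,N}}_{\infty,t} \le C_0\sko{1+\abs{X_0}} + C(N)\int_0^t \norm{X^{\delta,N}}_{\infty,s}\phi(s)\,ds + M_t^\delta,$$
where $\phi\in L_1[0,T]$ and $M_t^\delta$ gathers the Brownian contribution. Taking the $p$-th moment, applying Burkholder--Davis--Gundy together with Garsia--Rodemich--Rumsey to control $E[(M_T^\delta)^p]$ (using boundedness of $b$ in (E)), and closing the argument with Gronwall's lemma should yield the claim uniformly in $\delta$.

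The main obstacle is the kernel bookkeeping after Fubini: one must verify that composite kernels such as $\int_0^s(u-z)^{-1-\alpha}(s-z)^{-\alpha}dz$ and $\int_0^s(u-z)^{-1-\alpha}\int_z^s(s-r)^{-1-\alpha}dr\,dz$ produce functions of $s$ that are integrable on $[0,T]$; this is precisely where the constraint $\alpha\in(1-H,\tfrac12\wedge\beta)$ is needed, since the time-H\"older exponent of $c$ must compensate the $(s-r)^{-1-\alpha}$ singularity. A secondary technical point is controlling the H\"older part of the norm coming from $\int_0^\cdot b\,dW$, which requires Garsia--Rodemich--Rumsey applied to this martingale so that the resulting H\"older constant has finite $p$-th moments depending only on $K$ and $T$.
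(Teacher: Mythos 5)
Your proposal is correct and follows essentially the same route as the paper: the same decomposition of $\norm{X^{\delta,N}}_{\infty,t}$ via \eqref{euler}, the same use of \eqref{ineq}, \eqref{mc-euler} and hypotheses (A), (C), (E) with Fubini-type kernel bookkeeping, and Garsia--Rodemich--Rumsey plus a martingale moment bound for the Brownian term, closed by Gronwall. The only cosmetic differences are that the paper applies the generalized Gronwall lemma \cite[Lemma 7.6]{NR} pathwise --- the kernel there is the singular $(t-s)^{-2\alpha}s^{-2\alpha}$ rather than a $t$-independent $\phi\in L_1$, so the classical Gronwall lemma does not apply verbatim --- keeping $\sup_{s\in[0,T]} I_b(s)$ as a random inhomogeneous term whose $p$-th moment is estimated only at the very end, whereas you take expectations before iterating; both orderings work for the stopped process.
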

\begin{proof}
Take any  $\eta\in(0,1/2-\alpha)$. In this proof by $C$ we will denote a generic constant,
which may depend on $p$, $\alpha$, $\eta$, $T$, $\lambda$, $X_0$ and the constants from the main hypotheses, but is independent of $N$ and
$\delta$.

Denote $\norm{f}_t = \abs{f(t)} + \int_0^t \abs{f(t)-f(s)}(t-s)^{-1-\alpha} ds$,
so that $\norm{f}_{\infty,t} = \sup_{s\in[0,t]}\norm{f}_s$.

Now fix any $t<\tau_N$.
Write
\begin{gather*}
\snorm{X^{\delta}}_t \le \abs{X_0} + I_a(t) + I_b(t) + I_c(t):=\\
\abs{X_0}+ \norm{\int_0^{\cdot} a(t_s^\delta,X_{t_s}^\delta) ds}_{t}+
\norm{\int_0^{\cdot} b(t_s^\delta,X_{t_s}^\delta) dW_s}_{t}+\norm{\int_0^{\cdot} c(t_s^\delta,X_{t_s}^\delta) dB_s^H}_{t}.
\end{gather*}
Estimate
\begin{gather*}
I_a(t) \le
\int_0^{t}\aabs{a(t_s^\delta,X_{t_s}^\delta)} ds +
\int_0^{t}{\int_{s}^{t}\aabs{a(t_v^\delta,X_{t_v}^\delta)}dv}
{\big(t-s\big)^{-1-\alpha}}ds\\
\le C\left(\int_0^t \big(1+\aabs{X_{t^\delta_s}^{\delta}}\big) ds +
\int_0^{t}{\int_{s}^{t}\big(1+\aabs{X_{t^\delta_v}^\delta}\big)dv}
{\big(t-s\big)^{-1-\alpha}}ds\right)\\
\le C \left(1+ \int_0^t \snorm{X^{\delta}}_{\infty,v} ds +
\int_0^{t} \snorm{X^{\delta}}_{\infty,v}
{(t-v)^{-\alpha}}dv\right)\\
\le C\left(1+ \int_0^{t} \snorm{X^{\delta}}_{\infty,v}
{(t-v)^{-\alpha}}dv\right).
\end{gather*}
Further, write
\begin{gather*}
I_c(t) \le I_c'+I_c'':=   \abs{\int_0^{u} c(t_s^\delta,X_{t_s}^\delta) dB_s^H} +
\int_0^{t} \abs{\int_{s}^{t} c(t_v^\delta,X_{t_v}^\delta) dB^H_v}(t-s)^{-1-\alpha} ds.
\end{gather*}
By \eqref{ineq} and \eqref{mc-euler} (recall that $t<\tau_n$)
\begin{gather*}
I_c' \le C K_{t}^{\eta} \int_0^{t} \bigg(\aabs{c(t_s^\delta,X_{t_s}^\delta)} s^{-\alpha} +
\int_0^s \aabs{c(t_s^\delta,X_{t_s}^\delta)-c(t_z^\delta,X_{t_z}^\delta)} (s-z)^{-1-\alpha} dz\bigg) ds  \\
\le C N \int_0^t \bigg(\big(1+\snorm{X^{\delta}}_{\infty,s}\big) s^{-\alpha} +
\int_0^{t_s^\delta} \big((t_s^\delta-t_z^\delta)^\beta + \aabs{X^{\delta}_{t_s^\delta}-X^{\delta}_{t_z^\delta}}\big)
(s-z)^{-1-\alpha} dz\bigg) ds \\
\le C N  \bigg(1+ \int_0^t \snorm{X^{\delta}}_{\infty,s}s^{-\alpha} ds
+ \int_0^t\int_0^{t_s^\delta} \big( (s-z)^\beta + (z-t_z^\delta)^\beta +\delta^\beta \big)(s-z)^{-1-\alpha} dz\,ds\\
+ \int_0^t\int_0^{t_s^\delta} \big( \aabs{X_s^{\delta}-X_z^{\delta}} +  \aabs{X_s^{\delta}-X_{t_s^{\delta}}^{\delta}}
 + \aabs{X_z^{\delta}-X_{t_z^\delta}^{\delta}}\big)(s-z)^{-1-\alpha} dz\bigg)\\
\le C N  \bigg(1+ \int_0^t \snorm{X^{\delta}}_{\infty,s}s^{-\alpha} ds
+ t^{\beta-\alpha + 1} + \delta^{\beta-\alpha}
+ \int_0^t \snorm{X^{\delta}}_{\infty,s} ds \\
+ CN \int_0^t \int_0^{t_s^\delta}\Big[\big(1+\aabs{X^{\delta}_{t_s^\delta}}\big)
(s-t_s^\delta)^{1/2-\eta}+\big(1+\aabs{X^{\delta}_{t_z^\delta}}\big)
(z-t_z^\delta)^{1/2-\eta}\Big](s-z)^{-1-\alpha} dz\,ds\bigg)\\
\le C N^2  \bigg(1+ \int_0^t \snorm{X^{\delta}}_{\infty,s}s^{-\alpha} ds
+ \int_0^t \snorm{X^\delta}_{\infty,s} (s-t_s^\delta)^{1/2-\eta-\alpha} ds + J_c\bigg).
\end{gather*}
Here the inequality $\int_0^t \int_0^{t_s^\delta} (z-t_z^\delta)^\delta (s-z)^{-1-\alpha}dz\,ds<C\delta^{\beta -\alpha}$ is obtained
similarly to the following estimate of $J_c$, in which we change the order of integration noting that $z<t_s^\delta\Longleftrightarrow  t_z^\delta+\delta\le s$:
\begin{gather*}
J_c =  \int_0^t\int_0^{t_s^\delta} \big(1+\aabs{X^{\delta}_{t_z^\delta}}\big)
(z-t_z^\delta)^{1/2-\eta}(s-z)^{-1-\alpha} dz\,ds\\ =\int_0^{t_t^\delta} \big(1+\aabs{X^{\delta}_{t_z^\delta}}\big)
(z-t_z^\delta)^{1/2-\eta} \int_{t_z^\delta+\delta}^t (s-z)^{-1-\alpha} ds\,dz\\
=\int_0^{t_t^\delta} \big(1+\aabs{X^{\delta}_{t_z^\delta}}\big)
(z-t_z^\delta)^{1/2-\eta} (t_z^\delta+\delta-z)^{-\alpha} ds\,dz\\
=\sum_{k=1}^{[t/\delta]}
\big(1+\aabs{X^{\delta}_{\nu_{k-1}}}\big)\int_{\nu_{k-1}}^{\nu_{k}} (z-\nu_{k-1})^{1/2-\eta} (\nu_{k}-z)^{-\alpha} dz\\
= \sum_{k=1}^{[t/\delta]}
\big(1+\aabs{X^{\delta}_{\nu_{k-1}}}\big)\delta\cdot\delta^{1/2-\eta-\alpha}\mathrm{B}(3/2-\eta,1-\alpha)
\le C \bigg(1+\int_0^t \snorm{X^{\delta}}_{\infty,s} ds\bigg).
\end{gather*}
Hence we can write
\begin{gather*}
I_c' \le C N^2  \bigg(1+ \int_0^t \snorm{X^{\delta}}_{\infty,s}s^{-\alpha} ds\bigg).
\end{gather*}
Further, denoting $c_s = c(t_s,X_{t_s^\delta}^\delta)$,
\begin{gather*}
I''_c 
\le  C N  \int_0^t \int_{s}^{t} \bigg(\abs{c_v} (v-s)^{-\alpha} +
\int_{s}^v \abs{c_v - c_z}(v-z)^{-1-\alpha} dz \bigg)dv (t-s)^{-1-\alpha} ds\\
\le C N \bigg(\int_0^{t} \int_0^v (1+\snorm{X^{\delta}}_{\infty,v} )(v-s)^{-\alpha} (t-s)^{-1-\alpha} ds\,dv \\
+ \int_0^{t} \int_0^{v} \Big((t^\delta_v - t^\delta_z)^\beta + \aabs{X^\delta_{t^\delta_v} - X^\delta_{t^\delta_z}}\Big)
(v-z)^{-1-\alpha}(t-z)^{-\alpha} dz\,dv\bigg)\\
\le C N(H_1+H_2+H_3),
\end{gather*}
where
\begin{gather*}
H_1 \le \int_0^{t}\big(1+\snorm{X^{\delta}}_{\infty,v} \big) \int_0^v (v-s)^{-\alpha}(t-s)^{-1-\alpha} ds\,dv\\
\le C\int_0^t \big(1+\snorm{X^{\delta}}_{\infty,v}\big)(t-v)^{-2\alpha}dv\le C+C\int_0^t \snorm{X^{\delta}}_{\infty,v}(t-v)^{-2\alpha}dv,\\
H_2 
\le \int_0^t \int_0^{t_v^\delta} \big((v -z)^\beta+(z - t^\delta_z)^\beta\big)(v-z)^{-1-\alpha}(t-z)^{-\alpha}
\,dz\,dv\\
\le C + \int_0^t \int_0^{t_v^\delta} (z - t^\delta_z)^\beta (v-z)^{-1-\alpha}(t-z)^{-\alpha} dz\,dv,\\
H_3 
\le
\int_0^{t} \int_0^{t_v^\delta} \big(\aabs{X^{\delta}_{v} - X^{\delta}_{z}}+\aabs{X^{\delta}_{v} - X^{\delta}_{t^\delta_v}}+\aabs{X^{\delta}_{z} - X^{\delta}_{t^\delta_z}}\big)
(v-z)^{-1-\alpha} (t-z)^{-\alpha}dz\,dv\\
\le C\int_0^t \int_0^{v} \aabs{X^{\delta}_{v} - X^{\delta}_{z}}(v-z)^{-1-\alpha} dz  (t-v)^{-\alpha}  dv\\
+ C N  \int_0^t (v-t^\delta_v)^{1/2-\eta-\alpha} \big(1+\snorm{X^{\delta}}_{\infty,v}\big)(t-v)^{-\alpha} dv \\
+ C N  \int_0^t \int_0^{t_v^\delta} (z-t^\delta_z)^{1/2-\eta} \big(1+\aabs{X^{\delta}_{t_z^\delta}}\big)(v-z)^{-\alpha-1}(t-z)^{-\alpha}dz\, dv\\
\le C N \int_0^t \big(1+\snorm{X^{\delta}}_{\infty,v}\big) (t-v)^{-\alpha}dv + CN H_3'.
\end{gather*}
Here $H_3'$ is the integral appearing in the penultimate line; we skip the estimation of the last integral in $H_2$,
because it is analogous to the estimation of $H_3'$, but somewhat simpler.

Write (abbreviating $r=1/2-\eta$ and changing the order of integration)
\begin{equation}
\label{H3}
\begin{gathered}
H_3' = \int_0^{t_t^\delta} (z-t^\delta_z)^{r} \big(1+\aabs{X^{\delta}_{t_z^\delta}}\big)(t-z)^{-\alpha}\int_{t_z^\delta+\delta}^{t} (v-z)^{-\alpha-1}dv\,dz\\
\le C \int_0^{t_t^\delta} \big(1+\aabs{X^{\delta}_{t_z^\delta}}\big)(z-t^\delta_z)^{r} (t-z)^{-\alpha}(t_z^\delta+\delta-z)^{-\alpha}dz\\
= C \sum_{k=1}^{[t/\delta]} \big(1+\aabs{X^{\delta}_{\nu_{k-1}}}\big)\int_{\nu_{k-1}}^{\nu_{k}} (z-\nu_{k-1})^{r}
(\nu_{k}-z)^{-\alpha}(t-z)^{-\alpha}dz\\
\le \sum_{k=1}^{[t/\delta]-1} \big(1+\aabs{X^{\delta}_{\nu_{k-1}}}\big) (t-\nu_k)^{-\alpha}\int_{\nu_{k-1}}^{\nu_{k}} (z-\nu_{k-1})^{r}
(\nu_{k}-z)^{-\alpha}dz\\ + \big(1+\aabs{X^{\delta}_{t_t^\delta-\delta}}\big)\delta^r \int_{t_t^\delta-\delta}^{t_t^\delta}
(t_t^\delta-z)^{-\alpha} (t-z)^{-\alpha} dz\\
\le \sum_{k=1}^{[t/\delta]-1} \big(1+\snorm{X^{\delta}}_{\infty,\nu_k}\big) (t-\nu_k)^{-\alpha}ds + \big(1+\snorm{X^{\delta}}_{\infty,t_t^\delta-\delta}\big)\delta^r (t-t_t^\delta)^{1-2\alpha} \\
\le \int_0^t\big(1+\snorm{X^{\delta}}_{\infty,s}\big)(t-s)^{-2\alpha} ds.
\end{gathered}
\end{equation}
As a result,
\begin{gather*}
I_c(t)\le CN^2  \bigg(1+ \int_0^t \snorm{X^{\delta}}_{\infty,s}\big(s^{-\alpha}+(t-s)^{-2\alpha}\big) ds\bigg).
\end{gather*}

Adding the estimate for $I_a(t)$, we get for $t\le \tau_N$
\begin{gather*}
\norm{X^\delta}_t \le CN^2\bigg(1+ \int_0^t \snorm{X^{\delta}}_{\infty,s}\big(s^{-\alpha}+(t-s)^{-2\alpha}\big) ds\bigg) + I_b(t)\\
\le CN^2\bigg(1+ \sup_{s\in[0,T]} I_b(s) + t^{-2\alpha}\int_0^t \snorm{X^{\delta}}_{\infty,s} s^{-2\alpha}(t-s)^{-2\alpha} ds\bigg).
\end{gather*}

Therefore, by the generalized Gronwall lemma \cite[Lemma 7.6]{NR}, for $t\le \tau_N$
\begin{gather*}
\norm{X^\delta}_{\infty,t} \le  CN^2 \sup_{s\in[0,T]} I_b(s) \exp\set{CtN^{2/(1-2\alpha)}}\le C_{N}  \sup_{s\in[0,T]} I_b(s).
\end{gather*}
Putting $t=T\wedge \tau_N$ and using the obvious fact that $\norm{X^{\delta,N}}_{\infty,T}= \norm{X^\delta}_{\infty,T\wedge \tau_N}$, we get
\begin{gather*}
\norm{X^{\delta,N}}_{\infty,T} \le   C_{N}  \sup_{s\in[0,T]} I_b(s).
\end{gather*}
So it remains to prove that $E[\,\sup_{s\in[0,T]} I_b(s)^p]$ is bounded uniformly in $\delta$.
Write $$E[\,\sup_{s\in[0,T]} I_b(s)^p]\le I_b' + I_b'',$$ where, denoting $b^\delta_s = b(t_s^\delta,X(t_s^\delta))$,
\begin{gather*}
I_b' = E\left[\,\sup_{t\in[0,T]}\abs{\int_0^t b^\delta_s dW_s}^p \,\right]
\le C\int_0^T E\left[\,\abs{b(t_s^\delta,X(t_s^\delta))}^p \,\right]ds \le C,\\
I_b'' = E\left[\,\sup_{t\in[0,T]}\left(\int_0^t \abs{\int_s^t b(t_z^\delta,X(t_z^\delta))dW_z} (t-s)^{-1-\alpha}ds\right)^p \,\right].
\end{gather*}
It follows from the Garsia--Rodemich--Rumsey inequality that $\abs{\int_s^t b(t_z^\delta,X(t_z^\delta))dW_z}\le \xi_\delta\abs{t-s}^{1/2-\eta}$, where
$$
\xi_\delta = C\left(\int_0^T\!\!\int_0^T \frac{\abs{\int_x^y b_v^\delta dW_v}^{2/\eta}} {\abs{x-y}^{1/\eta}}dx\,dy\right)^{\eta/2}
$$
We have
$$
E[\,\xi_\delta^p\,]\le C \int_0^T\!\!\int_0^T \frac{E\left[\,\abs{\int_x^y b_v^\delta dW_v}^{2p/\eta}\right]} {\abs{x-y}^{p/\eta}}dx\,dy\\C \int_0^T\!\!\int_0^T \frac{\left|\int_x^y E\left[\,(b_v^\delta)^2\,\right] dv\right|^{p/\eta}} {\abs{x-y}^{p/\eta}}dx\,dy\le C,
$$
whence (recalling that $1/2-\eta>\alpha$)
$$
I_b'' = E [\,\xi_\delta^p\,]\sup_{t\in[0,T]}\left(\int_0^t (t-s)^{-1/2-\eta-\alpha}ds\right)^p \le C,
$$
as required.
\end{proof}

\section{A fundamental property of the sequence of Euler approximations}

Consider a pair of  partitions defined as $\{\nu_i=i T/n, 0\le i\le n \}$ 
and $\{\theta_j= j T/(n 2^{m}), 0\le j\le n 2^m \}$. 

Denote the diameters of these partitions, correspondingly, by $\delta=T/n$ and $\mu=\delta 2^{-m}$.

Let $t^\delta_u=\max \{ \nu_n:\nu_n \leq u \}$ and $t^\mu_u=\max \{
\theta_k:\theta_k \leq u \}$.
Continuous interpolations of corresponding Euler approximations
can be written in the integral form:
\begin{equation}\label{1_3}
\begin{gathered} X_u^{\delta} =X_0^{\delta} +\int_0^u a(t^\delta_s,X_{t^\delta_s}^{\delta})ds+
\int_0^ub(t^\delta_s,X_{t^\delta_s}^{\delta})dW_s+\int_0^uc(t^\delta_s,X_{t^\delta_s}^{\delta})dB_s^H,\\
X_u^{\mu} =X_0^{\mu} +\int_0^u a(t^\mu_s,X_{t^\mu_s}^{\mu})ds+
\int_0^ub(t^\mu_s,X_{t^\mu_s}^{\mu})dW_s+\int_0^uc(t^\mu_s,X_{t^\mu_s}^{\mu})dB_s^H.\end{gathered}
\end{equation}

Define, as before, a stopping time $\tau_N =\inf\set{t:K^\eta_t \ge N}\wedge T$
and $X^{\delta,N}_t = X^{\delta}_{t\wedge \tau_N}$, $X^{\mu,N}_t = X^{\mu}_{t\wedge \tau_N}$.
For $R\ge 1$ define $ B^{R,\delta,\mu}_t = \set{\norm{X^\delta}_{\infty,t} + \norm{X^\mu}_{\infty,t} \le R}$.

\begin{thm}\label{thm-fund}
Let $\alpha\in(1-H, \kappa)$, where $\kappa=\frac12\wedge\beta $. Then for any
$0<\eta<\kappa-\alpha$  and $N,R\ge 1$ the  following estimate holds
\begin{eqnarray*}
E\left[\snorm{X^{\delta,N}-X^{\mu,N}}^2_{2,T}\ind{B_T^{R,\delta,mu}}\right] \leq M_{R,N} \delta^{2(\kappa-\alpha-\varepsilon)},
\end{eqnarray*}
where the constant $M_{R,N}$ is independent of $\delta,\mu$.

Moreover, a similar estimate is valid for $E\left[\,\sup_{t\in[0,T]} \aabs{X^{\delta,N}_t-X^{\mu,N}_t}^2\,\ind{B_T^{R,\delta,mu}}\right]$.
\end{thm}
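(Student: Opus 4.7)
Set $\Delta_u = X^\delta_u - X^\mu_u$ and, by subtracting the two integral equations in \eqref{1_3}, write $\Delta_u = \int_0^u A_s\,ds + \int_0^u B_s\,dW_s + \int_0^u C_s\,dB^H_s$ with $A_s = a(t^\delta_s, X^\delta_{t^\delta_s}) - a(t^\mu_s, X^\mu_{t^\mu_s})$ and analogous $B_s$, $C_s$. Since the partition $\mu$ refines $\delta$, $\aabs{t^\delta_s - t^\mu_s}\le\delta$, and on the event $B^{R,\delta,\mu}_T \cap \set{u\le\tau_N}$ hypotheses (B), (C) yield $\aabs{A_s}+\aabs{B_s}\le K\delta^\beta + K\aabs{X^\delta_{t^\delta_s} - X^\mu_{t^\mu_s}}$. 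A telescoping use of \eqref{mc-euler} further reduces the state difference to $\aabs{\Delta_{t^\mu_s}} + C_{N,R}\delta^{1/2-\eta}$, since both Euler nodes $t^\delta_s$ and $t^\mu_s$ lie within a single $\delta$-step and the random H\"older constants are bounded by $N$ on the chosen event.

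I would then estimate $\snorm{\Delta}_t$ piece by piece, following the organisation of the proof of Lemma \ref{lem-aprior}. The drift term is elementary. For the fBm term I apply the pointwise estimate \eqref{ineq} to $C_s$ and, in the resulting double integrals, split $\aabs{C_s - C_z}$ into four differences corresponding to the two time shifts and the two state shifts: hypothesis (C) contributes $\delta^\beta$-type terms, hypothesis (D) together with a one-dimensional mean-value expansion rewrites state differences as $\aabs{\Delta_v - \Delta_w} + C_{N,R}\delta^{1/2-\eta}$, and the changes of order of integration are those of $J_c$ and $H_3'$ in Lemma \ref{lem-aprior}. The outcome is, on $B^{R,\delta,\mu}_T \cap \set{t\le\tau_N}$,
\[\snorm{\Delta}_t \le C_{N,R}\delta^{\kappa-\alpha-\varepsilon} + C_{N,R}\int_0^t \snorm{\Delta}_{\infty,s}\bigl(s^{-\alpha}+(t-s)^{-2\alpha}\bigr)ds + \sup_{s\in[0,T]} I_b(s),\]
where $I_b(s)$ denotes the $\norm{\cdot}_s$-norm of the Wiener integral $\int_0^\cdot B_v\,dW_v$. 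The generalized Gronwall lemma \cite[Lemma 7.6]{NR} absorbs the middle term into the others.

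Squaring, taking expectation and bounding $E\bigl[\sup_s I_b(s)^2 \ind{B^{R,\delta,\mu}_T}\bigr]$ by the BDG plus Garsia--Rodemich--Rumsey argument appearing at the end of the proof of Lemma \ref{lem-aprior} reduces the matter to a moment bound on the integrand $B_s$, which on the truncated event is dominated by $K\delta^\beta + K\aabs{\Delta_{t^\mu_s}} + C_{N,R}\delta^{1/2-\eta}$. Introducing $\varphi(t) = E\bigl[\sup_{s\le t}\aabs{\Delta_s}^2\ind{B^{R,\delta,\mu}_t}\ind{t\le\tau_N}\bigr]$ and closing by a final time-localized Gronwall inequality yields simultaneously the $\snorm{\cdot}_{2,T}$-bound and the supremum estimate stated in the theorem. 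The principal obstacle is the fBm integral: the decomposition of $C_s - C_z$ must balance the singular kernel $(s-z)^{-1-\alpha}$ against the H\"older factors $(z - t^\delta_z)^{1/2-\eta}$ coming from \eqref{mc-euler} without any loss beyond $\delta^{\kappa-\alpha-\varepsilon}$, and this is achieved by exactly the Beta-function/Riemann-sum cancellation that saved the estimates of $J_c$ and $H_3'$ above.
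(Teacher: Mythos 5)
Your treatment of the drift and fBm terms follows the paper's route closely: the same decomposition of $c_\Delta(u)-c_\Delta(z)$ via the four-point estimate \eqref{cocenka}, the same $\delta^\beta$ and $(z-t_z^\delta)^{1/2-\eta}$ contributions from \eqref{mc-euler}, and the same change-of-order-of-integration and Riemann-sum computations as in $J_c$ and $H_3'$ of Lemma~\ref{lem-aprior}. The genuine gap is in the Brownian term. You propose to run a \emph{pathwise} generalized Gronwall inequality first, leaving $\sup_s I_b(s)$ as an inhomogeneity, and then to bound $E\big[\sup_s I_b(s)^2\ind{B_T^{R,\delta,\mu}}\big]$ by the BDG plus Garsia--Rodemich--Rumsey argument from the end of Lemma~\ref{lem-aprior}. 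That argument works in the lemma only because there the integrand $b(t_s^\delta,X_{t_s^\delta}^\delta)$ is bounded by hypothesis (E), so $E[\xi_\delta^p]\le C$ uniformly in $\delta$ --- and a $\delta$-independent constant is all the lemma needs. In the theorem you must instead bound the integrand by $C\big(\delta^{\kappa-\eta}+\aabs{X_v^{\delta,N}-X_v^{\mu,N}}\big)$ to extract the rate $\delta^{2(\kappa-\alpha-\eps)}$, and then the Garsia--Rodemich--Rumsey constant requires moments of order $2p/\eta$ (with $\eta$ small) of the difference process, which your $L^2$-level quantity $\varphi(t)=E\big[\sup_{s\le t}\abs{\Delta_s}^2\cdots\big]$ cannot supply; in addition the truncation indicator $\ind{B_T^{R,\delta,\mu}}$ is anticipating and cannot be pushed inside the double integral defining $\xi_\delta$ or inside a BDG estimate. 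The circular estimate does not close.

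The paper avoids this entirely: it builds the \emph{adapted} indicator $\ind{B_s^{R,\delta,\mu}}$ into the norm $\norm{\cdot}_{R,t}$ from the outset, runs the Gronwall argument at the level of $E[\Delta_t]$ rather than pathwise, and handles $I_b''$ by a Cauchy--Schwarz splitting of the kernel $(s-u)^{-1-\alpha}=(s-u)^{-3/4-\alpha/2}(s-u)^{-1/4-\alpha/2}$ followed by the It\^o isometry applied to $\int_u^{s(N)}b_\Delta(v)\,dW_v$, using $B_s^{R,\delta,\mu}\subset B_u^{R,\delta,\mu}\in\mathcal F_u$ for $u<s$ to keep the indicator adapted. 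This needs only second moments of the difference and closes the loop with the kernel $g(t,s)$. To repair your proof you would have to replace the GRR step for $I_b$ by this isometry argument (or otherwise avoid high moments of the difference process); the rest of your plan then goes through essentially as in the paper.
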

\begin{proof}
As in the proof of Lemma \ref{lem-aprior}, by $C$ we will denote a generic constant,
which may depend on $\alpha$, $\eta$, $T$, $X_0$, the constants from the main hypotheses,  but is independent of
$\delta$ and $\mu$, $N$ and $R$. For the sake of shortness for a process $Z$ define $Z_{t,s} = Z_t-Z_s$ and denote 
$r=1/2-\eta$, $h(t,s)=(t-s)^{-1-\alpha}$, $g(t,s) = s^{-\alpha} + (t-s)^{-\alpha-1/2}$, $\ind{t}=\ind{B_t^{R,\mu,\delta}}$.

It follows from \eqref{1_3} that
\begin{equation}\label{riznytsia1}
\begin{gathered}  X_{u}^{\delta,N}-X_{u}^{\mu,N}
 =\int_0^{u(N)}\!\! a_\Delta(s)ds+
\int_0^{u(N)}\!\!b_\Delta(s) dW_s+\int_0^{u(N)}\!\!c_\Delta(s)dB_s^H\\=:\cI_a(u)+\cI_b(u)+\cI_c(u),\end{gathered}
\end{equation}
where $d_\Delta(s):=d(t^\delta_s,X_{t^{\delta,N}_s}^{\delta})-d(t^\mu_s,X_{t^{\mu,N}_s}^{\mu}),\; d\in\{a,b,c\}.$
Due to our hypotheses, on
\begin{equation}\label{ddelta}
\begin{gathered}
\abs{d_\Delta(s)} \le C\Big(|t^\delta_s-t^\mu_s|^\beta +\aabs{X_{t^{\delta}_s}^{\delta,N}-X_{t^\mu_s}^{\mu,N}}\Big)
\le C\Big((s-t_s^\delta)^\beta + \aabs{X_{t^\delta_s,t^{\mu}_s}^{\delta,N}}+\aabs{X_{t^\mu_s}^{\delta,N}-X_{t^\mu_s}^{\mu,N}}\Big)\\
\le C\Big((s-t_s^\delta)^\beta + C K_t^\eta (s-t_s^\delta)^r \big(1+\aabs{X_{t_s^\delta}^{\delta,N}}\big)+\aabs{X_{s}^{\delta,N}-X_{s}^{\mu,N}}\Big).
\end{gathered}
\end{equation}

Define
$$
\snorm{f}^2_{R,t} =  \int_0^t \left(\abs{f(s)} + \int_0^s \abs{f(s)-f(z)}(s-z)^{-1-\alpha}dz\right)^2 g(t,s)
\ind{s}ds
$$
and $\Delta_t = \norm{X^{\delta,N}-X^{\mu,N}}^2_{R,t}$.

Write
\begin{gather*}
\snorm{X^{\delta,N}-X^{\mu,N}}^2_{R,t}\le 3(\norm{\cI_a}^2_{R,t}+\norm{\cI_b}^2_{R,t}+\norm{\cI_c}^2_{R,t})
\le 6(I_a' + I_a''+I_b' + I_b''+I_c' + I_c''),
\end{gather*}
where $I_d' = \int_0^{t}|\cI_d(s)|^2 g(t,s)\ind{s} ds$, $I_d''= \int_0^t \big(\int_0^s|\cI_d(s)-\cI_d(u)| h(s,u) du\big)^2 g(t,s)\ind{s} ds$, $d\in\set{a,b,c}$.
We estimate this terms one by one. Note that some first estimates may be very rough.
The reason is that we do not need them to be finer than the (apparently worse) estimates that follow.

We will need the following trivial formula, checked directly for $t>\tau_N$ and $t\le\tau_N$:
\begin{equation}\label{trivintN}
\int_0^t\int_{s(N)}^{t(N)}\! f(t,s,v)dv\, ds = \int_0^{t(N)}\!\!\!\int_{s}^{t(N)}\!\! f(t,s,v)dv\, ds = \int_0^{t(N)}\!\!\int_{0}^{v} f(t,s,v)ds\, dv.
\end{equation}

By \eqref{ddelta} and the Cauchy--Schwartz inequality, we can write
\begin{gather*}
I_a'
\le C\int_0^t\int_0^{s(N)} \big(\delta^\beta + R N \delta^r + \aabs{X_{u}^{\delta,N}-X_{u}^{\mu,N}}\big)^2 du\, g(t,s) \ind{s}ds\\
\le C\int_0^t\int_0^s \big(\delta^{2\beta} + R N \delta^{2r} + \aabs{X_{u}^{\delta,N}-X_{u}^{\mu,N}}^2\ind{u}\big) du\, g(t,s) ds
\le C_{R,N}\bigg(\delta^{2(\kappa-\eta)} +  \int_0^t \Delta_s\,g(t,s) ds\bigg).
\end{gather*}
Similarly,
\begin{gather*}
I_a''\le  C_{R,N} \int_0^t \left(\int_0^{s} \int_{u(N)}^{s(N)}\!\! \big( \delta^{\kappa-\eta} + \aabs{X_{v}^{\delta,N}-X_{v}^{\mu,N}} \big)\ind{v}dv\, h(s,u)du\right)^2 g(t,s)ds\\
\le C_{R,N} \int_0^t\bigg(\delta^{\kappa-\eta} + \int_0^s \aabs{X_{v}^{\delta,N}-X_{v}^{\mu,N}} (s-v)^{-\alpha}\ind{v} dv \bigg)^2 g(t,s)ds \\
\le C_{R,N}\bigg(\delta^{2(\kappa-\eta)} +\int_0^t \int_0^s \aabs{X_{v}^{\delta,N}-X_{v}^{\mu,N}}^2 (s-v)^{-\alpha}\ind{v} dv \,g(t,s) ds\bigg)\\
\le C_{R,N}\bigg(\delta^{2(\kappa-\eta)} +\int_0^t\Delta_s\,g(t,s) ds\bigg).
\end{gather*}
Further, by \eqref{ineq}
\begin{gather*}
I_c'\le C N \int_0^{t}\bigg[\int_0^{s(N)}\left( |c_\Delta(u)|u^{-\alpha} du  +
\int_0^u |c_\Delta(u)-c_\Delta(z)|h(u,z) dz\right)du\bigg]^2\ind{s} g(t,s) ds\\
\le C_N \int_0^t \left[\bigg(\int_0^s |c_\Delta(u)|u^{-\alpha} \ind{u} du\bigg)^2 +
\bigg(\int_0^s \int_0^u |c_\Delta(u)-c_\Delta(z)|h(u,z) dz\ind{u} du\bigg)^2 \right]g(t,s) ds\\
=: C N(J_c'+J_c'').
\end{gather*}
Analogously to $I_a'$,
\begin{gather*}
J_c'\le C_{R,N}\bigg(\delta^{2(\kappa-\eta)} +  \int_0^t \Delta_s \,g(t,s) ds\bigg).
\end{gather*}
By   \cite[Lemma 7.1]{NR},
the hypotheses (A)--(D) 
imply that
for any $t_1,t_2,x_1,\dots,x_4$
\begin{equation}\label{cocenka}
\begin{gathered}
\abs{c(t_1,x_1)-c(t_2,x_2)-c(t_1,x_3)+c(t_2,x_4)}\le K\abs{x_1-x_2-x_3+x_4}\\
+ K\abs{x_1-x_3}\abs{t_2-t_1}^\beta+ K\abs{x_1-x_3}( \abs{x_1-x_2} + \abs{x_3-x_4}).
\end{gathered}
\end{equation}
Therefore, taking into account H\"older continuity of $c$ in the first variable and Lipschitz continuity in the
second, we get
\begin{equation}
\label{cdelta}
\begin{gathered}
|c_\Delta(u)-c_\Delta(z)|= \aabs{c(t_u^\delta,X^{\delta,N}_{t_u^\delta})-c(t_z^\delta,X_{t_z^\delta}^{\delta,N})-
c(t_u^\mu,X^{\mu,N}_{t_u^\mu})+c(t_z^\mu,X_{t_z^\mu}^{\mu,N})}\\
\le \aabs{c(u,X^{\delta,N}_u)-c(z,X^{\delta,N}_z)-c(u,X^{\mu,N}_u)+c(z,X^{\mu,N}_u)} +  C\Big((u-t_u^\delta)^\beta + (z-t_z^\delta)^\beta\\
+(u-t_u^\mu)^\beta+ (z-t_z^\mu)^\beta
+\aabs{X^{\delta,N}_{u,t_u^\delta}}+\aabs{X^{\delta,N}_{z,t_z^\delta}}
+\aabs{X^\mu_{u,t_u^\mu}}+\aabs{X^\mu_{z,t_z^\mu}}\Big)\\
\le C_{R,N}\bigg(\aabs{X^{\delta,N}_u-X^{\delta,N}_z-X^{\mu,N}_u+X^{\mu,N}_z}
+ \aabs{X^{\delta,N}_{u}-X^{\mu,N}_{u}} (u-z)^\beta \\+ \aabs{X^{\delta,N}_{u}-X^{\mu,N}_{u}}
\Big(\aabs{X^{\delta,N}_{u,z}} + \aabs{X^{\mu,N}_{u,z}}\Big)+ (u-t_u^\delta)^{\kappa-\eta} + (z-t_z^\delta)^{\kappa-\eta}\bigg).
\end{gathered}
\end{equation}
Note also that $$|c_\Delta(u)-c_\Delta(z)|=\aabs{c(t_u^\mu,X_{t_u^\mu}^\mu)-c(t_z^\mu,X_{t_z^\mu}^\mu)}\le C\big(\abs{t_u^\mu-t_z^\mu}^\beta+\aabs{X_{t_u^\mu,t_z^\mu}^\mu}\big)$$ for $z\in[t_u^\delta,t_u^\mu)$ and
$|c_\Delta(u)-c_\Delta(z)|=0$ for $z\in[t_u^\mu,u)$, hence $J_c''$ admits an estimate
\begin{gather*}
J_c''\le C_{R,N} \int_0^{t}\bigg[\int_0^s\int_0^{t_u^\delta}\bigg(\aabs{X^{\delta,N}_{u}-X_{z}^{\delta,N}-X^{\mu,N}_{u}+X_{z}^{\mu,N}}
+\aabs{X^{\delta,N}_{u}-X^{\mu,N}_{u}} (u-z)^\beta\\
+ \aabs{X^{\delta,N}_{u}-X^{\mu,N}_{u}}
\Big(\aabs{X^{\delta,N}_{u,z}} + \aabs{X^{\mu,N}_{u,z}}\Big)\\+ (u-t_u^\delta)^{\kappa-\eta} + (z-t_z^\delta)^{\kappa-\eta}\bigg)h(u,z)dz\,du\bigg]^2 g(t,s)\ind{s}ds\\
+ \int_0^{t}\bigg(\int_0^s\int_{t_u^\delta}^{t_u^\mu} \Big((t_u^\mu-t_z^\mu)^\beta  + \aabs{X^{\mu,N}_{t_u^\mu,t_z^\mu}}\Big)
h(u,z) dz\,du\bigg)^2 g(t,s) \ind{s} ds\le C_{R,N}( H_1 + H_2 + H_3 + H_4).
\end{gather*}
Here we split integrand simply by the rows to estimate them apart. Write
\begin{gather*}
H_1 \le C_{R,N}\int_0^t\bigg[ \int_0^s  \bigg(\int_0^u \aabs{X^{\delta,N}_{u}-X_{z}^{\delta,N}-X^{\mu,N}_{u}+X_{z}^{\mu,N}} h(u,z) dz\bigg)^2\ind{u} du
\\+  \bigg(\int_0^s \aabs{X^\delta_{u}-X^\mu_{u}} (u-t_u^\delta)^{\beta-\alpha}\ind{u} du\bigg)^2\bigg]g(t,s)ds \le C_{R,N}\int_0^t \Delta_s\,g(t,s) ds.
\end{gather*}
Further,
\begin{gather*}
H_2\le C_{R,N}\int_0^t\left( \int_0^s \aabs{X^{\delta,N}_{u}-X^{\mu,N}_{u}} \int_0^u\Big(\aabs{X^{\delta,N}_{u,z}} + \aabs{X^{\mu,N}_{u,z}}\Big)h(u,z)dz\ind{u}du\right)^2g(t,s) ds  \\
\le C_{R,N} \int_0^t \bigg(\int_0^s \aabs{X^{\delta,N}_{u}-X^{\mu,N}_{u}}\big(\snorm{X^{\delta,N}}_{\infty,u}+
\snorm{X^{\mu,N}}_{\infty,u}\big)\ind{u} du\bigg)^2 g(t,s) ds\\
\le C_{R,N} \int_0^t \int_0^s \aabs{X^{\delta,N}_{u}-X^{\mu,N}_{u}}^2\ind{u} du\, g(t,s) ds
\le C_{R,N} \int_0^t \Delta_s \,g(t,s) ds.
\end{gather*}
Analogously to \eqref{H3},
\begin{gather*}
H_3\le C_{R,N} \int_0^t\bigg( \int_0^s\int_{0}^{t_u^\delta}\big(
(u-t_u^\delta)^{\kappa-\eta} + (z-t_z^\delta)^{\kappa-\eta}
\big)h(s,z) dz\,du\bigg)^2 g(t,s) \ind{s} ds  \\
\le C_{R,N}\int_0^t\bigg[\bigg(\int_0^s(u-t_u^\delta)^{\kappa-\alpha - \eta}du \bigg)^2  + \int_0^t \bigg(\int_0^{t_s^\delta}
(z-t_z^\delta)^{\kappa-\eta} (t_z^\delta+\delta-z)^{-\alpha} dz \bigg)^2\bigg]g(t,s) ds\\
\le C_{R,N} \int_0^t\bigg[ \delta^{2(\kappa-\alpha-\eta)} + \bigg(\sum_{k=1}^{[s/\delta]} \int_{\nu_{k-1}}^{\nu_k} (z-\nu_{k-1})^{\kappa-\eta} (\nu_k-z)^{-\alpha} dz\bigg)^2 \bigg] g(t,s) ds\\
\le C_{R,N} \bigg( \delta^{2(\kappa-\alpha-\eta)} + \int_0^t\bigg(\sum_{k=1}^{[s/\delta]} \delta\cdot \delta^{\kappa-\alpha-\eta}\bigg)^2 g(t,s) ds\bigg]\le C_{R,N} \delta^{2(\kappa-\alpha-\eta)}.
\end{gather*}
Similarly,
\begin{gather*}
H_4
\le C\int_0^t \bigg(\int_0^s \int_{t_u^\delta}^{t_u^\mu} \Big((t_u^\mu -t_z^\mu)^\beta+
R N(t_u^\mu-t_z^\mu)^r\Big)h(u,z)dz\,du\bigg)^2 g(t,s)\ind{s}ds\\
\le
C_{R,N} \int_0^t\bigg(\int_0^s\int_{t_u^\delta}^{t_u^\mu}\big((u-z)^{\kappa-\eta} + (z-t_z)^{\kappa-\eta}\big)h(u,z)dz\,du\bigg)^2g(t,s) ds
\le C_{R,N} \mu^{2(\kappa-\alpha-\eta)}.
\end{gather*}
Now turn to $I_c''$. By \eqref{ineq} and \eqref{trivintN}, we can write
\begin{gather*}
I_c'' \le N \int_0^{t} \bigg(\int_0^{s(N)}\!\! \int_{u}^{s(N)}\!\! \bigg(\abs{c_\Delta(v)}(v-u)^{-\alpha}\\ + \int_{u}^v \abs{c_\Delta(v)-c_\Delta(z)}h(v,z)dz\bigg)dv \,h(s,u)du\bigg)^2 g(t,s)\ind{s} ds
\\\le C_N \int_0^{t} \bigg(\int_0^s\bigg( \abs{c_\Delta(v)}(s-v)^{-2\alpha} + \int_0^{v} \abs{c_\Delta(v)-c_\Delta(z)}h(v,z)
(s-z)^{-\alpha}dz\bigg)\ind{v}dv\bigg)^2 g(t,s)ds \\\le C_N (L_c'+L_c''),
\end{gather*}
where we have used that $\int_0^v (v-u)^{-\alpha}h(s,u)du\le C(s-v)^{-2\alpha}$. The term $L_c'$ is estimated similarly to $I_a''$:
\begin{gather*}
L_c'
\le C_{R,N}\int_0^t\bigg(\int_0^s \Big(\delta^{\kappa-\eta} + \aabs{X_v^{\delta,N}-X_v^{\mu,N}}\Big)(s-v)^{-2\alpha}\ind{v}dv\bigg)^2 g(t,s)ds\\
\le C_{R,N}\bigg(\delta^{2(\kappa-\eta)}+\int_0^t \int_0^s  \aabs{X_v^{\delta,N}-X_v^{\mu,N}}^2 (s-v)^{-2\alpha}\ind{v} dv\,g(t,s)ds\bigg)\\
\le C_{R,N}\bigg(\delta^{2(\kappa-\eta)}+\int_0^t \Delta_s \,g(t,s)ds\bigg).
\end{gather*}

To handle $L_c''$, we use the estimate \eqref{cdelta} and proceed exactly as with $J_c''$:
\begin{gather*}
L_c''\le C_{R,N} \int_0^{t}\bigg(\int_0^s\int_0^{t_v^\delta}\bigg(\aabs{X^{\delta,N}_{v}-X_{z}^{\delta,N}
-X^{\mu,N}_{v}+X_{z}^{\mu,N}}
+\aabs{X^{\delta,N}_{v}-X^{\mu,N}_{v}} (v-z)^\beta\\
+ \aabs{X^{\delta,N}_{v}-X^{\mu,N}_{v}}
\Big(\aabs{X^{\delta,N}_{v,z}} + \aabs{X^{\mu,N}_{v,z}}\Big)\\+ (v-t_v^\delta)^{\kappa-\eta} + (z-t_z^\delta)^{\kappa-\eta}
\bigg)h(v,z)(s-z)^{-\alpha}dz\, dv\bigg)^2\ind{s}g(t,s)ds \\
+ \int_0^{t}\bigg(\int_0^s \int_{t_v^\delta}^{t_v^\mu} \Big((t_v^\mu-t_z^\mu)^\beta  + \aabs{X^{\mu,N}_{t_v^\mu,t_z^\mu}}\Big)
h(v,z) (s-z)^{-\alpha}dz\,dv\bigg)^2\ind{s} g(t,s)ds\le  C_{R,N}\sum_{k=1}^4 G_k.
\end{gather*}
Now
\begin{gather*}
G_1\le  \int_0^{t}\bigg(\int_0^s\int_0^{v}\aabs{X^{\delta,N}_{v}-X_{z}^{\delta,N}
-X^{\mu,N}_{v}+X_{z}^{\mu,N}}h(v,z)dz (s-v)^{-\alpha} \ind{v}dv\bigg)^2g(t,s)ds\\
+\int_0^t \bigg(\int_0^s\aabs{X^{\delta,N}_{v}-X^{\mu,N}_{v}} \int_0^v (v-z)^{\beta-\alpha-1}(s-z)^{-\alpha }dz\, dv\ind{v} \bigg)^2 g(t,s)ds \\
\le \int_0^{t}\int_0^s\bigg(\int_0^{v}\aabs{X^{\delta,N}_{v}-X_{z}^{\delta,N}-X^{\mu,N}_{v}+X_{z}^{\mu,N}}h(v,z)dz\bigg)^2 (s-v)^{-\alpha}\ind{v} dv\,g(t,s)ds\\ +
\int_0^t \bigg(\int_0^s\aabs{X^{\delta,N}_{v}-X^{\mu,N}_{v}} (s-v)^{\beta -2\alpha}\ind{v} dv\bigg)^2 g(t,s)ds
\le C\int_0^t\Delta_s\, g(t,s)ds.
\end{gather*}
Further,
\begin{gather*}
G_2\le C  \int_0^t\bigg(\int_0^s\aabs{X^{\delta,N}_{v}-X^{\mu,N}_{v}} \big(\snorm{X^{\delta,N}}_{\infty,v}+\snorm{X^{\mu,N}}_{\infty,v}\big)(s-v)^{-\alpha}\ind{v} dv\bigg)^2 g(t,s)ds\\
\le C_R  \int_0^t \int_0^s \aabs{X^{\delta,N}_{v}-X^{\mu,N}_{v}}^2 (s-v)^{-\alpha}\ind{v} dv\,g(t,s)ds \le C_R \int_0^t
\Delta_s\, g(t,s) ds,\\
G_3\le C  \int_0^t\bigg(\int_0^s (v-t_v^\delta)^{\kappa-\alpha-\eta}(s-v)^{-\alpha} dv\\ + \int_0^{t_s^\delta}  (z-t_z^\delta)^{\kappa-\eta} (s-z)^{-\alpha} (t_z^\delta+\delta-z)^{-\alpha}  dz\bigg)^2g(t,s)\ind{s}ds
\le C  \delta^{2(\kappa-\eta-\alpha)}.
\end{gather*}
Here the last integral is estimated analogously to the term $H_3'$ (equation \eqref{H3}) in the proof of Lemma~\ref{lem-aprior}:
\begin{gather*}
\int_0^{t_s^\delta}  (z-t_z^\delta)^{\kappa-\eta} (s-z)^{-\alpha} (t_z^\delta+\delta-z)^{-\alpha}  dz
\\\le  \sum_{k=1}^{[s/\delta]-1}(s-\nu_k)^{-\alpha}\int_{\nu_{k-1}}^{\nu_k} (z-\nu_{k-1})^{\kappa-\eta} (\nu_k-z)^{-\alpha} dz
+ \delta^{\kappa-\eta}\int_{t_s^\delta-\delta}^{t_s^\delta} (s-z)^{-\alpha} (t_s^\delta-z)^{-\alpha}  dz\\
\le C\bigg(\sum_{k=1}^{[s/\delta]-1}(s-\nu_k)^{-\alpha}\delta\cdot\delta^{\kappa-\alpha-\eta} + \delta^{\kappa-\alpha}\delta^{1-2\alpha}\bigg)
\le C\delta^{k-\alpha-\eta}\bigg(\int_0^s (s-z)^{-\alpha} dz + 1\bigg)\le  C\delta^{k-\alpha-\eta}.
\end{gather*}
Similarly,
\begin{gather*}
G_4\le  C\int_0^t\bigg(\int_0^s\int_0^{t_v^\mu} \Big((t_v^\mu-t_z^\mu)^\beta + R N(t_v^\mu-t^\mu_z)^r \Big)h(v,z)(s-z)^{-\alpha}dz\,dv\bigg)^2g(t,s)ds\\
\le C_{R,N}\int_0^{t}\bigg(\int_0^s \int_0^{t_v^\mu} \Big((v-z)^{\kappa-\eta}+(z-t_z^\mu)^{\kappa-\eta} \Big)h(v,z)dz\,(s-z)^{-\alpha} dz\,dv\bigg)g(t,s)ds \\\le C_{R,N} \mu^{2(\kappa-\alpha-\eta)}.
\end{gather*}
Summing up, we have an estimate
\begin{gather*}
\norm{\cI_a}^2_{R,t}+\norm{\cI_c}^2_{R,t} \le C_{R,N} \bigg(\delta^{2(\kappa-\alpha-\eta)} + \int_0^t \Delta_s\, g(t,s)ds\bigg).
\end{gather*}
Hence,
\begin{equation}
\label{Ia+Ic}
\begin{gathered}
E\left[\big(\norm{\cI_a}_{R,t}+\norm{\cI_c}_{R,t}\big)^2 \ind{B_t^{R}}\right]%
\le C_{R} \bigg(\delta^{2(\kappa-\alpha-2\eta)} + \int_0^t E\big[\Delta_s\big] g(t,s)ds\bigg).
\end{gathered}
\end{equation}
Now turn to $I_b'$ and $I_b''$. Using \eqref{ddelta} and noting that $B_s^{R}\subset B_u^{R}\in \mathcal F_u$ for
$u<s$, we have
\begin{gather*}
E\big[I_b' \big] = \int_0^{t} E\left[\bigg(\int_0^{s(N)} b_\Delta(u) dW(u)\bigg)^2\ind{s} \right] g(t,s)ds\\
\le  \int_0^t \int_0^s E\left[b_\Delta(u)^2\ind{u}\right]du\, g(t,s)ds\\
\le C\int_0^t\int_0^s E\left[\big(\delta^\beta + R N \delta^r + \aabs{X_u^{\delta,N}-X_u^{\mu,N}}\big)^2\ind{u}\right]du\,g(t,s)ds\\
\le C_{R,N} \bigg(\delta^{2(\kappa-\eta)} + \int_0^t E\big[\Delta_s\big]g(t,s)ds\bigg).
\end{gather*}
Further,
\begin{gather*}
E[I_b''] = \int_0^t E\left[\bigg(\int_0^{s(N)}\abs{ \int_u^{s(N)}\!\! b_\Delta(v) dW_v} (s-u)^{-1-\alpha}ds\bigg)^2
\ind{s}\right]g(t,s)ds\\
\le
\int_0^t \int_0^{s}E\left[\bigg(\int_u^{s(N)\vee u}\!\! b_\Delta(v) dW_v\bigg)^2\ind{B_s^R}\right] (s-u)^{-3/2-\alpha} du
\int_0^s (s-u)^{-1/2-\alpha} du \,g(t,s)ds
\\
\le C \int_0^t \int_0^s \int_u^s E\left[\big(\delta^\beta + R N \delta^r + \aabs{X_v^{\delta,N}-X_v^{\mu,N}}\big)^2\ind{v}\right]dv (s-u)^{-3/2-\alpha}du\,g(t,s)ds\\
\le C_{R,N} \bigg(\delta^{2(\kappa-\eta)} +
\int_0^t\int_0^s  E\big[\aabs{X_v^{\delta,N}-X_v^{\mu,N}}^2 \ind{v}\big](s-v)^{-1/2-\alpha}dv\,g(t,s)ds\bigg)\\
\le C_{R,N} \bigg(\delta^{2(\kappa-\eta)} +
\int_0^t E\big[\Delta_s\big]g(t,s)ds\bigg).
\end{gather*}
Combining this with \eqref{Ia+Ic}, we get
\begin{gather*}
E\big[\Delta_t\big]\le C_{R,N} \bigg(\delta^{2(\kappa-\alpha-\eta)} + \int_0^t E\big[\Delta_s \big] g(t,s) ds \bigg)\\
\le C_{R,N} \bigg(\delta^{2(\kappa-\alpha-\eta)} + t^{1/2+\alpha}\int_0^t E\big[\Delta_s \big] (t-s)^{-1/2-\alpha}s^{-1/2-\alpha} ds \bigg),
\end{gather*}
whence by the generalized Gronwall lemma
\begin{gather*}
E[\Delta_{t}]\le C_{R,N} \delta^{2(\kappa-\alpha-\eta)}.
\end{gather*}
Obviously, $\norm{f}_{2,T}\ind{T}\le \norm{f}_{R,T}$, so the first statement of the theorem is proved.

To prove the second, for we derive for $\omega\in B_s^R$ absolutely similarly to the previous estimates
(it is easy to check that the terms $\delta^{a}$ with $a>0$ enter with bounded coefficients \emph{before}
the integration with respect to $s$) that
\begin{gather*}
\cI_a(s)^2+\cI_c(s)^2\le C_{R,N}\big(\delta^{2(\kappa-\alpha-\eta)}+ \Delta_s\big).
\end{gather*}
The remaining term $\cI_b(s)$ estimated similarly with the help of Burkholder inequality:
$$
E\left[\sup_{u\le s}\cI_b(u)^2\ind{s}\right] \le C_{R,N}\big(\delta^{2(\kappa-\eta)}+ E\big[\Delta_s\big]\big),
$$
whence
\begin{gather*}
E\left[\sup_{s\le T}\Big(\cI_a(s)^2+\cI_b(s)^2+\cI_c(s)^2\Big)\ind{s}\right] \le C_{R,N}\big(\delta^{2(\kappa-\alpha-\eta)}+ E\big[\Delta_T\big]\le C_{R,N}\big(\delta^{2(\kappa-\alpha-\eta)},
\end{gather*}
but $(X^{\delta,N}_s -X^{\mu,N}_s)^2\le  3\big(\cI_a(s)^2+\cI_b(s)^2+\cI_c(s)^2\big)$, so the second statement follows.
\end{proof}

\section{Existence and uniqueness}
\begin{thm} Equation \eqref{main} has a solution such that for any $\alpha\in(1-H,\kappa)$
\begin{equation}\label{solution-cond}
\norm{X}^2_{\infty,\alpha,[0,T]}]<\infty\quad\text{a.s.}
\end{equation}
This solution is unique in the class of processes satisfying \eqref{solution-cond} for some $\alpha>1-H$.
\end{thm}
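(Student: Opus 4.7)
The strategy is to prove existence by constructing the solution as an almost-sure limit of Euler approximations along a dyadic subsequence of mesh sizes, exploiting Theorem~\ref{thm-fund} to obtain summable bounds, and to prove uniqueness by a Gronwall-type argument that mimics the estimates of Theorem~\ref{thm-fund} but is applied directly to the difference of two candidate solutions.

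For existence, fix $\alpha\in(1-H,\kappa)$ and $\eta\in(0,\kappa-\alpha)$ and take $\delta_n=T\cdot 2^{-n}$, so that consecutive partitions are nested (the case $m=1$ of Theorem~\ref{thm-fund}). That theorem yields
\[
E\!\left[\sup_{t\in[0,T]}\aabs{X^{\delta_n,N}_t-X^{\delta_{n+1},N}_t}^2\ind{B_T^{R,\delta_n,\delta_{n+1}}}\right]\le M_{R,N}\,\delta_n^{2(\kappa-\alpha-\eta)},
\]
and since $\sum_n 2^{-n(\kappa-\alpha-\eta)}<\infty$, Chebyshev's inequality combined with the Borel--Cantelli lemma shows that, almost surely on $\{\tau_N=T\}\cap\bigcap_n B_T^{R,\delta_n,\delta_{n+1}}$, the sequence $(X^{\delta_n})$ is Cauchy in $C([0,T])$ and converges uniformly to some limit $X$. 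Lemma~\ref{lem-aprior} together with Markov's inequality makes $\sup_n P(\norm{X^{\delta_n,N}}_{\infty,T}>R)$ small as $R\to\infty$, while $K^\eta_T<\infty$ almost surely gives $P(\tau_N<T)\to 0$ as $N\to\infty$; sending $R$ and then $N$ to infinity shows that $X^{\delta_n}$ converges uniformly a.s.\ on all of $\Omega$, and the limit inherits the bound \eqref{solution-cond} from Lemma~\ref{lem-aprior}.

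To verify that this $X$ solves \eqref{main}, I would pass to the limit in the Euler equation \eqref{euler}. The drift term is handled by dominated convergence using Hypothesis~(A) and the uniform bounds of Lemma~\ref{lem-aprior}, while the It\^o integral is handled by the isometry together with Hypothesis~(E) after localization at $\tau_N$. The Young integral is the delicate term: continuity of the generalized Lebesgue--Stieltjes integral in the $\norm{\cdot}_{\infty,\alpha,[0,T]}$-norm of the integrand reduces matters to showing that $\norm{c(\cdot,X^{\delta_n}_\cdot)-c(\cdot,X_\cdot)}_{\infty,\alpha,[0,T]}\to 0$ on the localizing events, which in turn follows from the regularity of $c$ granted by Hypotheses~(C)--(D) combined with the full strength of Theorem~\ref{thm-fund} (both the $\norm{\cdot}_{2,T}$-estimate and the supremum estimate). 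This transfer of convergence through the $\alpha$-fractional derivative is the main obstacle in the argument.

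For uniqueness, suppose $X$ and $Y$ both satisfy \eqref{main} and \eqref{solution-cond} for some $\alpha>1-H$. Introduce the joint localization $\rho_{N,R}=\tau_N\wedge\inf\set{t:\norm{X}_{\infty,t}+\norm{Y}_{\infty,t}\ge R}\wedge T$ and set $Z=X-Y$. Repeating the computations from the proof of Theorem~\ref{thm-fund} on the difference of two true solutions---where all discretization error terms $\delta^{\kappa-\eta}$ are absent---produces an estimate of the form
\[
E\!\left[\norm{Z}^2_{R,t\wedge\rho_{N,R}}\right]\le C_{N,R}\int_0^t E\!\left[\norm{Z}^2_{R,s\wedge\rho_{N,R}}\right]\bigl(s^{-\alpha}+(t-s)^{-\alpha-1/2}\bigr)\,ds,
\]
to which the generalized Gronwall lemma \cite[Lemma~7.6]{NR} applies to give $Z\equiv 0$ on $[0,\rho_{N,R}]$ almost surely. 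Sending $N,R\to\infty$ completes the proof.
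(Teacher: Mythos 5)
Your overall strategy coincides with the paper's: existence via the dyadic Euler approximations $\delta_n=T2^{-n}$ controlled by Lemma~\ref{lem-aprior} and Theorem~\ref{thm-fund}, passage to the limit term by term in \eqref{euler}, and uniqueness by localization plus the generalized Gronwall lemma (your $\rho_{N,R}$ is the paper's $\sigma_N$ in all but name, and that part is sound). There is, however, a genuine gap in your existence argument at the point where you upgrade convergence from the localizing events to all of $\Omega$. Your Borel--Cantelli step gives almost sure Cauchyness of $(X^{\delta_n,N})$ only on the event $\bigcap_n B_T^{R,\delta_n,\delta_{n+1}}$, and you then claim this event fills up $\Omega$ as $R\to\infty$ because $\sup_n P(\norm{X^{\delta_n,N}}_{\infty,T}>R)$ is small. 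But the complement of that intersection is a union over infinitely many $n$, and a uniform bound on the individual probabilities does not control $P\big(\bigcup_n\set{\norm{X^{\delta_n,N}}_{\infty,T}>R}\big)$: Lemma~\ref{lem-aprior} gives $\sup_n E\big[\norm{X^{\delta_n,N}}_{\infty,T}^p\big]<\infty$, not $E\big[\sup_n\norm{X^{\delta_n,N}}_{\infty,T}^p\big]<\infty$, and without the latter the probability of $\bigcap_n B_T^{R,\delta_n,\delta_{n+1}}$ need not tend to $1$ as $R\to\infty$.

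The paper avoids this by reversing the order of limits: it first shows that $(X^{k,N})$ is Cauchy in $L^2$ (in both $E[\norm{\cdot}_{2,T}^2]$ and $E[\sup_t\abs{\cdot}^2]$), splitting each \emph{pairwise} expectation on the event $A^{R,N,k,l}$, taking $\varlimsup_{k,l}$ first (which kills the first term by Theorem~\ref{thm-fund} for every fixed $R$) and only then letting $R\to\infty$ in the remainder, where only a supremum of individual probabilities, not a union, appears; an almost surely convergent subsequence is then extracted. You could also repair your route by letting $R=R_n$ grow with $n$, but then you must track how $M_{R,N}$ in Theorem~\ref{thm-fund} depends on $R$, which the statement of that theorem does not give you. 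A smaller point: your reduction of the Young-integral limit to $\norm{c(\cdot,X^{\delta_n}_\cdot)-c(\cdot,X_\cdot)}_{\infty,\alpha,[0,T]}\to 0$ asks for more than the convergence actually established (Theorem~\ref{thm-fund} controls $\norm{X^{\delta_n,N}-X^N}_{2,T}$ and the uniform distance, not the $\infty,\alpha$-norm of the difference); the paper instead bounds the integral difference directly via \eqref{ineq} by products of the $2$-norm of the difference with the almost surely finite $\infty$-norms of the individual processes, which is what actually closes the argument.
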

\begin{zau}
It is possible to generalize this result almost verbatim to a multidimensional case. Moreover, instead of fractional Brownian motion
one can take any process, which is almost surely H\"older continuous with H\"older exponent $\gamma>1/2$.
\end{zau}

\begin{proof}
As in the previous proof, we denote $Z_{t,s} = Z_t-Z_s$, $r=1/2-\eta$, $h(t,s)=(t-s)^{-1-\alpha}$, $g(t,s) = s^{-\alpha} + (t-s)^{-1/2-\alpha}$.
$C$ is a generic constant, which may depend on fixed parameters, but is independent of variables.

\emph{Existence}. Let $\delta_k = T/2^k$. To keep notations simple, denote $X^{k}=X^{\delta_k}$, $X^{k,N}=X^{\delta_k,N}$, $t_s^k = t_s^{\delta_k}$,
$\norm{\cdot}=\norm{\cdot}_{2,T}$, $\norm{\cdot}_\infty=\norm{\cdot}_{\infty}$.

It is easy to see that for each $N\ge 1$ the sequence $\set{X^{k,N}_t,k\ge 1}$ is fundamental in the norm
$\big(E[\norm{\cdot}^2]\big)^{1/2}$. Indeed,
define $A^{R,N,k,l} = \set{\norm{X^{k,N}}+\norm{X^{l,N}}\le R}$, take $p>1$, denote $q=p/(p-1)$ and
write
\begin{gather*}
E\left[\snorm{X^{k,N}-X^{l,N}}^2 \right]\\\le E\left[\snorm{X^{k,N}-X^{l,N}}^2 \ind{A^{R,N,k,l}}\right] +
E\left[\big(\snorm{X^{k,N}}_\infty+\snorm{X^{l,N}}_\infty\big)^2\ind{\Omega\setminus A^{R,N,k,l}}\right]\\
\le E\left[\snorm{X^{k,N}-X^{l,N}}^2 \ind{A^{R,N,k,l}}\right] +
\left(E\left[\big(\snorm{X^{k,N}}_{\infty}+\snorm{X^{l,N}}_{\infty}\big)^{2p}\right]\right)^{1/p}P\big({\Omega\setminus A^{R,N,k,l}}\big)^{1/q}.
\end{gather*}
By Theorem~\ref{thm-fund}, the first term vanishes as $k,l\to\infty$, so we can write
\begin{gather*}
\varlimsup_{k,l\to\infty} E\left[\snorm{X^{k,N}-X^{l,N}}^2 \right]\le \sup_{k,l}
\left(E\left[\big(\snorm{X^{k,N}}_{\infty}+\snorm{X^{l,N}}_{\infty}\big)^{2p}\right]\right)^{1/p}P({\Omega\setminus A^{R,N,k,l}})^{1/q}.
\end{gather*}
Lemma~\ref{lem-aprior} implies that $\sup_{k,l}
E\left[\big(\norm{X^{k,N}}_{\infty}+\norm{X^{l,N}}_{\infty}\big)^{2p}\right]<\infty$ and also, through
Markov inequality, that $\sup_{k,l} P({\Omega\setminus A^{R,N,k,l}})\to 0$, $R\to\infty$.
Therefore, letting $R\to\infty$, we get
\begin{gather*}
\varlimsup_{k,l\to\infty} E\left[\snorm{X^{k,N}-X^{l,N}}^2 \right] =0,
\end{gather*}
as claimed. Similarly, from the second statement of Theorem~\ref{thm-fund} we obtain
\begin{gather*}
\varlimsup_{k,l\to\infty} E\Big[\sup_{t\in[0,T]}\abs{X^{k,N}-X^{l,N}}^2 \Big] =0.
\end{gather*}

Thus, for each $N\ge 1$ there exists a process $X^N$ such that $E\left[\snorm{X^{k,N}-X^{N}}^2\right]\to 0$
and $E\big[\sup_{t\in[0,T]}\abs{X^{k,N}-X^{N}}^2 \big]\to 0$, $k\to \infty$; the limits agree because
each of this two facts implies the convergence in $L^2([0,T]\times\Omega)$. Using
a usual argument we can show that there exists a subsequence $\set{k_j,j\ge 1}$ such that for any $N\ge 1$ \
$\snorm{X^{{k_j},N}-X^N}+\sup_{t\in[0,T]}\abs{X^{k_j,N}-X^{N}}\to 0$ a.s., $j\to\infty$.
Without loss of generality we can assume that the sequence itself converges a.s. to zero:
\begin{equation}\label{as-conv}
\snorm{X^{k,N}-X^{N}}+\sup_{t\in[0,T]}\abs{X^{k,N}-X^{N}}\to 0 \quad\text{a.s. as }  k\to \infty.
\end{equation}

Note that for each $N\ge 1$, $p>0$ \ $E[\norm{X^N}^p_{\infty}]<\infty$. Indeed,
we already have a uniform convergence, so it is enough to show boundedness of the integral in the definition of $\norm{\cdot}_{\infty}$.
By the Fatou lemma,
\begin{gather*}
\int_0^t \aabs{X^N_{t,z}}(t-z)^{-1-\alpha} dz \le \varliminf_{k\to\infty} \int_0^t \aabs{X^{k,N}_{t,z}}(t-z)^{-1-\alpha} dz\le
\varliminf_{k\to\infty} \norm{X^{k,N}}_{\infty},
\end{gather*}
therefore
\begin{gather*}
\sup_{s\in[0,T]}\int_0^t \aabs{X^N_{t,z}}(t-z)^{-1-\alpha} dz\le \varliminf_{k\to\infty} \norm{X^{k,N}}_{\infty},
\end{gather*}
and applying the Fatou lemma again, we get
\begin{gather*}
E\left[\,\left(\sup_{s\in[0,T]}\int_0^t \aabs{X^N_{t,z}}(t-z)^{-1-\alpha} dz\right)^p\,\right]\le \varliminf_{k\to\infty} E\big[\norm{X^{k,N}}_{\infty}^p\big]<\infty.
\end{gather*}

Further, for $N'\le N''$ and $t\le \tau_{N'}$ \ $X^{N'}_t = X_t^{N''}$ a.s. Indeed, for any $t$ \ $X^{k,N'}_t \to X^{N''}_t$ and
$X^{k,N''}_t \to X^{N''}_t$ a.s. as $k\to \infty$; but for  $t\le \tau_{N'}$ \ $X^{k_j,N''}_t=X^{k_j,N''}_t$, so
$X^{N'}_t = X_t^{N''}$ a.s.

Hence there exists a process $X$ such that for any $N$ and any $t$ \ $X^N_t = X_{t\wedge \tau_N}$. We are going
to prove that $X$ solves \eqref{main}. This will be done by showing that each of the integrals in \eqref{euler}
converges to a corresponding integral in \eqref{main}. To this end, consider the differences between these
integrals:
\begin{gather*}
\cI^N_a(t) = \int_0^{t(N)}\! a_\Delta(s) ds,\ \cI^N_b(t) = \int_0^{t(N)}\! b_\Delta(s) dW_s,\ \cI^N_c(u) = \int_0^{t(N)}\! c_\Delta(s) dB^H_s,
\end{gather*}
here $d_\Delta(s) = d(s,X_s)-d(t_s^{k},X^{k}_{t_s^k})$, $d\in\set{a,b,c}$. Similarly to \eqref{ddelta},
\begin{equation}\label{mdelta}
\begin{gathered}
\abs{d_\Delta} \le C\Big((s-t_s^k)^\beta + C K_t^\eta (s-t_k^\delta)^r \big(1+\aabs{X_{t_s^k}^{k,N}}\big)+
\aabs{X_{s}^{k,N}-X_{s}}\Big)
\end{gathered}
\end{equation}
Estimate
\begin{gather*}
\abs{\cI^N_a(t)} \le  \int_0^{t}\! \abs{a_\Delta(s(N))} ds
\le C\Big(\delta^\beta_k +  N \delta_k^r \big(1+\snorm{X^{k,N}}\big)+
\snorm{X^{k,N}-X^{N}}\Big).
\end{gather*}
Thanks to \eqref{as-conv}, the norm $\snorm{X^{k,N}}$ is a.s. bounded in $k$. Consequently, $\cI^N_a(t)\to 0$ a.s., $k\to\infty$.

Further,
\begin{gather*}
E\left[ \big(\cI^N_b(t)\big)^2\right]\le \int_0^t E\left[b_\Delta(s)^2\right]ds \\
\le C\Big(\delta_k^\beta + \delta_k^r E\left[\big(1+\snorm{X^{k,N}}\big)^2\right]+E\left[\snorm{X^{k,N}-X^N}^2\right]\Big)\to 0, k\to\infty.
\end{gather*}
We can extract a subsequence $\set{k_j,j\ge 1}$ such that for each $N\ge 1$ \ $\cI^N_a(t)\to 0$ a.s., $j\to\infty$. Again, we will
assume without loss of generality that sequence itself vanishes.

Finally,
\begin{gather*}
\abs{\cI^N_c(t)}\le
C N \int_0^{t} \bigg(|c_\Delta(s)|s^{-\alpha} ds  +
\int_0^s |c_\Delta(s)-c_\Delta(z)|h(s,z) dz\bigg)ds =: CN\big(I_c' + I_c''\big).
\end{gather*}
Similarly to $\cI^N_a(t)$, $I_c'\to 0$, $k\to\infty$.
Using an estimate similar to \eqref{cdelta}, write
\begin{gather*}
\int_0^{t}\int_0^{t_s^\delta}\bigg(\aabs{X^N_{s}-X^N_{z}-X^{k,N}_{s}+X^{k,N}_{z}}
+\aabs{X^N_{s}-X^{k,N}_{s}} (s-z)^\beta\\
+ \aabs{X^N_{s}-X^{k,N}_{s}}
\Big(\aabs{X^N_{s,z}} + \aabs{X^{k,N}_{s,z}}\Big)+ (s-t_s^k)^\beta + (z-t_k^\delta)^\beta
+ \aabs{X^{k,N}_{s,t_s^k}}+ \aabs{X^{k,N}_{z,t_z^k}}\bigg)h(s,z)dz\,ds\\
+ \int_0^{t}\int_{t_s^\delta}^{s} \Big((s-z)^\beta  + \aabs{X^k_{s,z}}\Big)
h(s,z) dz\, ds\\
\le C N \bigg(\snorm{X^{k,N} -X^N}\big(1+ \snorm{X^N}_{\infty} + \snorm{X^{k,N}}_{\infty}\big) +\delta_k^\beta\\
{}+ \big(1+\snorm{X^{k,N}}\big)\int_0^t\!\int_0^{t_s^\delta} \big((s-t_s^\delta)^r + (z-t_z^\delta)^r\big)h(s,z)dz\,ds+ \delta_k^{\beta-\alpha} + \delta_k^{r-\alpha}\big(1+\snorm{X^{k,N}}\big)\bigg)\\
\le CN\Big(\snorm{X^{k,N} -X^N}\big(1+ \snorm{X^N}_{\infty} + \snorm{X^{k,N}}_{\infty}\big) +\delta^{\beta-\alpha}_k+\delta_k^{r-\alpha}\big(1+\snorm{X^{k,N}}_{\infty}\big)\Big)
\to 0,\ k\to\infty.
\end{gather*}
The last holds since $\snorm{X^N}_{\infty}$ is almost surely finite and $\snorm{X^{k,N}}_{\infty}$ is bounded in probability uniformly in $k$.

As a result, $\abs{\cI^N_a(s)} + \abs{\cI^N_a(s)}+\abs{\cI^N_a(s)}\to 0$, $k\to \infty$. But since $X^N_{t} - X_{t}^{k,N} = \cI_a^N(t) + \cI_b^N(t)+\cI_c^N(t)$ and $X^{k}$ solves \eqref{euler} with $\delta=\delta_k$,
we get that $X$ solves \eqref{main} up to each of the moments $\tau_N$. But (since $K_T^\eta$ is finite a.s.) $\tau_N\to T$ a.s., $N\to \infty$,
which implies that $X$ is a solution to \eqref{main}.

\emph{Uniqueness}. Let $X^1$, $X^2$ be two solutions of \eqref{main}. Define a stopping time $$\sigma_N = \inf\set{t:K_t^\eta + \norm{X^1}_{\alpha,t}+\norm{X^2}_{\alpha,t}\ge N}$$ and denote $X^{1,N}_t = X^1_{t\wedge \sigma_N}$, $X^{2,N}_t = X^2_{t\wedge \sigma_N}$,
$\Delta_s = \norm{X^{1,N}-X^{2,N}}^2_{2,s}$.

We have
\begin{gather*}
X_{u}^{1,N}-X_{u}^{2,N}
 =\int_0^{u(N)} \!\! a_\Delta (s) ds+
\int_0^{u(N)}\!\!b_\Delta (s) dW_s+\int_0^{u(N)}\!\!c_\Delta (s) dB_s^H\\=:\cI_a(u)+\cI_b(u)+\cI_c(u),
\end{gather*}
where $d_\Delta (s)=d(s,X_s^{1,N})-d(s,X_{s}^{2,N})$, $d\in\{a,b,c\}$, we have $\abs{d_\Delta (s)}\le C\aabs{X_s^{1,N}-X_s^{2,N}}$.
Thus,
\begin{gather*}
\Delta^2_t\le 3(\norm{\cI_a}^2_{2,t}+\norm{\cI_b}^2_{2,t}+\norm{\cI_c}^2_{2,t}.
\end{gather*}
The estimates will be similar to those of Theorem~\ref{thm-fund}, so we skip some minor details.

Estimate
\begin{gather*}
\norm{\cI_a}^2_{2,t}\le \int_0^t\bigg(\int_0^s \aabs{a_\Delta(u)}du\, ds + \int_0^s \int_{u}^{s} \abs{a_\Delta(v)} dv\, h(s,u)du\bigg)^2g(t,s) ds\\
\le C\int_0^t \bigg[\Delta_s + \bigg( \int_0^s \aabs{X^{1,N}_v-X^{1,N}_v} (s-v)^{-\alpha} dv\bigg)^2\bigg] g(t,s)ds\le C\int_0^t\Delta_s\,g(t,s)ds.
\end{gather*}
Further, $\norm{\cI_c(t)}^2_{2,t} \le  \int_0^t\left[\cI_c(s)^2 + \left(\int_0^s \abs{\cI_c(s)-I_c(u)}h(s,u)du\right)^2\right]g(t,s)ds=:I_c'+I_c''$.
Using \eqref{ineq} and \eqref{cocenka}, write
\begin{gather*}
I_c'\le C N \int_0^{t} \bigg[\int_0^s\bigg( \abs{c_\Delta(u)} u^{-\alpha} + \int_0^u \abs{c_\Delta(u)-c_\Delta(z)} h(u,z)dz\bigg)du\bigg]^2 g(t,s)ds
\\\le C_N\int_0^t \Delta_s\, g(t,s) ds
 + C_N\int_0^{t} \int_0^s\bigg(\int_0^u \Big(\aabs{X^{1,N}_u-X^{1,N}_z - X^{2,N}_u+X^{2,N}_z}\\ + (u-z)^\beta\abs{X^{1,N}_u- X^{2,N}_u}
 + \aabs{X^{1,N}_u-  X^{2,N}_u} \big(\aabs{X^{1,N}_{u,z}}+\aabs{X^{2,N}_{u,z}}\big) \Big)h(s,z) dz\bigg)^2du\,g(t,s)\,ds\\
\le C_N \int_0^t \Delta_s\, g(t,s) ds.
\end{gather*}
Similarly, by \eqref{ineq}  and \eqref{cocenka}
\begin{gather*}
I_c''\le C N \int_0^{t} \bigg[\int_0^{s}\int_0^s\bigg(\abs{c_\Delta(v)} (v-u)^{-\alpha} + \int_u^v \abs{c_\Delta(v)-c_\Delta(z)} h(v,z)dz\bigg)dv\,h(s,u)du\bigg]^2g(t,s)ds\\
\le C_N \int_0^{t} \bigg[\int_0^s\bigg(\aabs{X^{1,N}_v-X^{2,N}_v} (s-v)^{-2\alpha} + \int_0^v \abs{c_\Delta(v)-c_\Delta(z)} h(v,z)dz\bigg)(s-v)^{-\alpha}dv\bigg]^2g(t,s)ds\\
\le C_N\int_0^{t} \Delta_s\, g(t,s) ds +
 C_N\int_0^{t}\int_0^s\bigg( \int_0^v\Big(\aabs{X^{1,N}_{v,z} - X^{2,N}_{v,z}} + \abs{X^{1,N}_v- X^{2,N}_v} (v-z)^\beta
\\{} + \aabs{X^{1,N}_v-  X^{2,N}_v} \big(\abs{X^{1,N}_{v,z}}+\abs{X^{2,N}_{v,z}}\big) \Big)h(v,z) dz\bigg)^2 (s-v)^{-\alpha} dv\,g(t,s)ds
\le C_N \int_0^{t}\Delta_s g(t,s)\, ds.
\end{gather*}
Summing the estimates for $\cI_a(t)$ and $\cI_c(t)$ and using the Cauchy--Schwartz inequality, we get
\begin{gather*}
E\left[\big(\norm{\cI_a(t)}_{2,t} + \norm{\cI_c(t)}_{2,t}\big)^2\right]
\le C_N \int_0^t E\big[\Delta_s\big] g(t,s) ds.
\end{gather*}
Finally,
\begin{gather*}
E\big[\norm{\cI_b(t)}^2_{2,t}\big] \le 2\int_0^t E\left[\abs{\cI_b(s)}^2\right]g(t,s)ds + 2\int_0^t E\left[\bigg(\int_0^{s(N)}\!\abs{
\int_{u}^{s(N)}\!\! b_v\, dW_v} h(s,u)du\bigg)^2\right]g(t,s)ds\\
\le C \int_0^t \int_0^s E\left[b_{\Delta}(u)^2\right]du\, g(t,s)ds + C\int_0^t \int_0^s \int_u^s E\left[b_\Delta(v)^2\right] dv(s-u)^{-3/2-\alpha}du\,
g(t,s) ds  \\
\le C \int_0^t E[\Delta_s]g(t,s)ds.
\end{gather*}
As a result,
\begin{gather*}
E[\Delta_t] \le C_N \int_0^t E[\Delta_s] g(t,s) ds,
\end{gather*}
so by the generalized Gronwall lemma $E[\Delta_t]=0$ for all $t$. This implies
$X_t^1=X_t^2$ a.s. for all $t<\tau_N$. By letting $N\to\infty$ we get $X^1_t=X^2_t$ a.s. for all $t$.
\end{proof}

\end{document}